\def\bea{\begin{eqnarray}}
\def\eea{\end{eqnarray}}
\def\bean{\begin{eqnarray*}}
\def\eean{\end{eqnarray*}}
\renewcommand\eqref[1]{(\ref{#1})}
\def\R{\mathbb{R}}
\def\N{\mathbb{N}}
\def\Ind#1{\,\mathbb{I}\{#1\}\,}
\theoremstyle{plain}
\newtheorem{theorem}{Theorem}
\newtheorem{corollary}[theorem]{Corollary}
\theoremstyle{definition}
\theoremstyle{remark}
\theoremstyle{note}
\newtheorem{note}[theorem]{Note}
\definecolor{orange}{rgb}{1,0.5,0}
\definecolor{green}{rgb}{0.513,0.73,0.442}
\renewcommand\emph[1]{{\it #1}} 
  \def \q[#1][#2][#3]{q_{#2}^{#3}\left(#1\right)}
\def\R{\mathbb{R}} 
\def\N{\mathbb{N}} 
\def\dt{{h}} 
\def\Ind#1{\,\mathbb{I}\{#1\}\,}  
\definecolor{NeonYellowWhiteOnBlack}{rgb}{0.016,0.009, 0.975}
\def\kwy#1{#1}
\def\kwy#1{\textcolor{NeonYellowWhiteOnBlack}{#1}}
\definecolor{NeonBlueWhiteOnBlack}{rgb}{0.975, 0.016, 0.009}
\def\kwb#1{#1}
\def\kwb#1{\textcolor{NeonBlueWhiteOnBlack}{#1}}
\definecolor{NeonOrangeWhiteOnBlack}{rgb}{0.0075, 0.44, 0.5525}
\def\kwo#1{#1}
\def\kwo#1{\textcolor{NeonOrangeWhiteOnBlack}{#1}}
\def\kwdb#1{#1}
\def\kwdb#1{\textcolor{orange}{#1}}
\def\kwp#1{#1}
\def\kwp#1{\textcolor{OliveGreen}{#1}}
\definecolor{reproduction-gray}{gray}{0.65}
\def\paragraphTopicSentence#1{#1}
\def\paragraphTopicSentence#1{
	\color{summarizing-gray}
	\emph{#1}
	\color{black}
}
\definecolor{summarizing-gray}{gray}{0.35}
\newcounter{sectionSentenceCounter}[section] 
\newcounter{subsectionSentenceCounter}[subsection] 
\begin{document}

\begin{frontmatter}
%
%
%
%
\title{Trajectory composition of Poisson time changes and Markov counting systems}
¥
\author{Carles Bret\'{o}} \fnref{label1}
\fntext[label1]{Tel. +34916245855; Fax:+34916249848}
\ead{carles.breto@uc3m.es}
\address{Departamento de Estad\'{i}stica and Instituto Flores de Lemus, Universidad Carlos III de Madrid, C/ Madrid 126, Getafe, 28903, Madrid, Spain}
%
%
\begin{abstract}
Changing time of simple continuous-time Markov counting processes by independent unit-rate Poisson processes results in Markov counting processes for which we provide closed-form transition rates via composition of trajectories and with which we construct novel, simpler infinitesimally over-dispersed processes.
\end{abstract}
%
%
\begin{keyword}
Compartmental models\sep
Subordination\sep
Random time change\sep
Compound process\sep
Infinitesimal dispersion
\end{keyword}
\end{frontmatter}

%
%
¥
\section{Introduction}
\label{sec:introduction}
%
%
The \kwp{statistical analysis of dynamical systems} plays an important role in scientific research. 
When these \kwp{systems} involve counts, such analysis may be carried out using continuous-time Markov \kwdb{processes}, \kwdb{which} are often approximations to real systems and hence fail to capture some \kwp{features} of real data. 
Luckily, some of these \kwp{features} may be better captured after replacing time in those processes by a Markov \kwdb{random time}. 
Such a \kwdb{time randomization} approach to improving statistical modeling was recently proposed and studied in detail in \cite{breto2011}, where the resulting Markov time-changed processes are defined via \kwp{transition rates}. 
%
Unfortunately, \kwp{transition rates} of such time-changed processes are in general unavailable and can be difficult to obtain in \kwdb{closed form}. 
%
This lack of \kwdb{closed-form} transition rates limits the appeal of this time randomization approach and may even discourage applied researchers from using \kwp{it} at all. 
%
%
To help make this \kwp{approach} more appealing, this paper considers changing by a Poisson process the time of a large family that includes many continuous-time Markov counting processes used in applications (for example from epidemiology, biochemistry or sociology). 
For the resulting time-changed process, transition rates are provided in the required \kwdb{closed form}. 
These \kwdb{closed-form transition rates} constitute the \kwp{main result} of this paper, \kwp{which} we obtain by composing trajectories of the counting process with those of the random time (instead of by integrating out the random \kwdb{time}). 
Our choice of a \kwdb{Poisson time} change seems to be unusual in the applied literature and produces time-changed models simpler than those previously considered, as we illustrate by constructing several novel over-dispersed counting processes, which can be used as building blocks to construct multivariate over-dispersed Markov counting systems. 

%
%
%
¥
\paragraphTopicSentence{Dynamical systems that involve counts have been studied in many \kwy{disciplines} by considering \kwo{Markov counting systems} without simultaneous events, although \kwp{compound systems} (which allow simultaneity) have also received some attention.} 
%
\kwy{Fields} where counting systems have been modeled as continuous-time Markov chains include epidemiology and ecology \citep{kermack1927, shrestha2011}, pharmacokinetics \citep{matis1979, haseltine2002, srivastava2002} and engineering and operations research \citep{doig1957, jackson2002}. 
In these fields, most such processes are in fact \kwo{Markov counting systems} \citep{breto2011}, mainly networks of queues \citep{bremaud1999} or compartmental models \citep{jacquez1996, matis2000} that rule out the possibility of simultaneous transitions or events. 
When simultaneous events are possible, these \kwo{counting systems} are called compound \citep{breto2011}. 

%
\paragraphTopicSentence{\kwp{Compound counting systems} can capture better the variability in real data thanks to being \kwy{infinitesimally over-dispersed} and have been constructed relying on \kwo{random time changes} and defined via \kwb{closed-form transition rates}, which is what this \kwdb{paper is concerned} with.}
%
\kwp{Compound Markov counting systems} have been considered as a means to increase compatibility of theoretical models with real data, for example in the context of DNA sequence alignment and genomic data \citep{thorne1992} and of environmental stochasticity and epidemiological data \citep{breto2009}. 
\kwp{They} are also \kwy{infinitesimally over-dispersed} \citep{breto2012}, which is a model feature favored by infectious-disease data \citep{breto2009, he2009, ionides2006-pnas, shrestha2011}. 
Such \kwy{over-dispersion} can be modeled with compound processes, which can be constructed under mild conditions \citep{breto2012b} via the well-known operation of random change of time. 
Such \kwo{time randomization} approach was considered in detail in \citet{breto2011}, after being first illustrated in \cite{breto2009} where a compound compartmental model was constructed and defined by transition rates expressed in closed form. 
Investigating such closed-form rates for other models in general is \kwdb{our main concern}. 

%
\paragraphTopicSentence{The \kwdb{problem that this paper} addresses is the difficulty deriving closed-form transition rates of time-changed processes, which lies in the non-linearity of \kwo{expected values} of \kwy{transition probabilities} and which \kwp{limits the appeal} of time randomization in applications.}
%
Transition rates of Markov counting processes can be understood as appropriate limits of transition probabilities \citep{bremaud1999}. 
These \kwy{probabilities} are most likely non-linear in time. 
After time is randomized, transition rates are instead determined by the expected transition probabilities (with respect to the randomized time), but such \kwo{expected values} need not be readily available in closed form due to the non-linearity. 
Consider a unit-rate Poisson process whose time index $t$ is changed by random time $R(t)$. 
Its \kwo{expected} probability of $k$ transitions over time interval $[0,l\,]$ is the left hand side of~\eqref{eqn:gamma-rate} below, which is an analytic expression. 
A corresponding closed-form expression can be obtained, for example, assuming that $\{R(t)\}$ is a gamma process with 
$E \big[ R(t) \big] = t$ and $V \big [ R(t) \big ] = t / \tau$. 
Such expression is \citep{breto2011, kozubowski2009}, if $\Gamma$ is the gamma function, 
\bea
\label{eqn:gamma-rate}
E_R \Big[ R(l)^k e^{-R(l)} \Big]/k! &=& \Gamma \bigl( l/\tau + k \bigr) / \Bigl( k!\, \Gamma \bigl( l/\tau \bigr) \bigl(1 + \tau \bigr) ^{ l/\tau} \bigl( 1 + \tau^{-1} \bigr) ^{k} \Bigr).
\eea 
However, this closed-form expression is based on derivations specific to the Poisson gamma process of this example and it need not extend straightforwardly to other random times or counting processes (like the non-linear death processes considered in Section~\ref{sec:applications}). 
Seemingly technical difficulties like this one can \kwp{prevent} applied researchers from randomizing time. 

%
\paragraphTopicSentence{The \kwp{discouraging} limitations imposed on time randomization by unavailable closed-form expressions stem from the necessity to define time-changed models as \kwy{implicit} hierarchies and the resulting complications on \kwo{model interpretation}, which we seek to \kwdb{alleviate in this paper}.}
%
\kwy{Implicit definitions} of a model are those given in terms of numerical procedures to generate realizations or sample paths \citep{breto2009}, 
\kwy{e.g.}, for our Poisson gamma example above, a realization at time $t$ would come from the following hierarchy of random draws: use a value drawn from a gamma random variable with mean $t$ (and variance $t/\tau$) as the mean of a Poisson random variable from which to draw the desired process realization. 
\kwy{Implicit definitions} like this one are all that is needed to do inference using ``plug-and-play'' methods \citep{breto2009}, without the need to work out any closed-form expressions like~\eqref{eqn:gamma-rate}. 
However, \kwy{implicit models} can be harder to interpret. 
Consider what an applied researcher might ask when deciding what to make of and how to interpret results obtained from an implicit model: 
Is the time-changed model well-behaved? 
What aspects of the original model vary after changing time? 
Should interpretation of the original parameters change? 
How does the choice of time change affect the answers to these questions?
\kwo{Interpretation issues} like these might be tackled considering implicit definitions only but answers may reflect numerical artifacts and may not be as apparent as with closed-form expressions, as illustrated in next section. 
Helping mitigate such \kwo{interpretation issues} to make time randomization more attractive is the ultimate \kwdb{goal of this paper}. 

%
\paragraphTopicSentence{The \kwdb{main contribution} of this paper is to provide \kwy{closed-form transition rates} for a large family of Markov counting processes time-changed by \kwo{Poisson times} via composition of trajectories and to illustrate how these closed-form rates \kwb{facilitate the use of time randomization} to improve Markov counting systems used in applications.} 
%
The sought \kwy{closed-form expressions} are provided in 
Section~\ref{sec:thm} 
under mild requirements satisfied by many well-behaved processes considered in the applied literature. 
These \kwy{expressions} provide the desired details about the time-changed process to help address interpretation issues, promoting the use of time randomization. 
\kwy{They} are obtained by focusing on process trajectories to get around non-linear expectations like~\eqref{eqn:gamma-rate}, for which the unusual Poisson time change turns out to be convenient. 
Not only does our \kwo{Poisson time choice} facilitate the derivation of the expressions, it also avoids increasing the number of parameters, which results in a simpler interpretation of the parameters of the time-changed models. 
This is illustrated in Section~\ref{sec:applications}, where \kwb{time randomization} is considered for several processes of interest in the biological and social sciences and applied in detail to the widespread Poisson, linear birth and linear death processes to construct novel, infinitesimally over-dispersed processes and multivariate systems without the interpretation issues of time-changed models defined only implicitly. 
%
%
¥
\section{Closed-form transition rates of simple Markov counting processes time-changed by Poisson processes}
\label{sec:thm}
Before giving the closed-form time-changed rates in Theorem~\ref{thm:rates} below, 
we introduce our notation as we progress through some fundamental aspects of \kwdb{counting processes}, \kwp{random change of time} and \kwdb{Poisson processes}, which provides a context for the \kwp{theorem}.

\kwdb{Continuous-time Markov counting processes} are fully characterized by \kwy{transition rates}, which are intimately related to the process \kwo{compoundness} and \kwb{intensity}, and which will also be used to define our \kwp{time-changed processes}. 
We denote a time-homogeneous continuous-time Markov counting process by $\{X(t)\}$ and define it via its transition rates or transition semigroup local characteristics \citep{bremaud1999}, which we write as
\begin{eqnarray}
\label{eqn:transition-rates}
\q[x,k][X][] &\equiv& \lim\limits_{\dt \downarrow 0} \frac{P \Big( X(t+\dt) = x + k \; | \; X(t)=x \Big) }{\dt}
\end{eqnarray}
where $\dt, t \in \R_{\geq 0}$, $x \in \N_{\geq 0}$ and $k \in \N_{\geq 1}$. 
\kwy{Transition rates} determine whether $\{X(t)\}$ is infinitesimally over-dispersed, which occurs if and only if simultaneous events are possible \citep{breto2011}---i.e., if and only if $\{X(t)\}$ is compound so that there exists at least one $x$  and $k^{\star} > 1$ with $\q[x,k^{\star}][X][] > 0$. 
If $\{X(t)\}$ is not \kwo{compound}, 
then it is {\it simple} \citep{daley2003} and $\q[x,1][X][]$ is the only non-zero transition rate.
\kwy{Transition rates} are also related to the process rate function or intensity \citep{daley2003}, which we write as
\bea
\nonumber 
\lambda_{X}(x) \equiv \lim\limits_{\dt \downarrow 0}\frac{1 - P \Big( X(t + \dt) = x\; | \; X(t)=x \Big)}{\dt}. 
\eea 
If the \kwb{rate function} summarizes the overall activity implied by the transition rates and is finite---i.e., if it satisfies $\lambda_X(x) = \sum_{k\ge 1}\q[x,k][X][] < \infty$ for all $x$, we say that $\{X(t)\}$ is conservative and stable \citep{bremaud1999}. 
Conservation, stability and simpleness are satisfied by many well-behaved processes used in applications (including those in Section~\ref{sec:applications} below) and are the only conditions that Theorem~\ref{thm:rates} imposes on the counting process to be \kwp{time-changed}. 
(Actually, time homogeneity is also imposed but it can be relaxed at the cost of complicating notation and derivations.)

\kwp{Random change of time} only needs to be \kwy{sketched} at this point and details are postponed to~\ref{app:proof}, but we do elaborate here on our reasons for \kwdb{choosing a Poisson} time change over other time changes. 
\kwy{In a nutshell}, random time change involves a base process, say $\{X(t)\}$, whose time index $t$ is replaced by a random time process, say $\{N(t)\}$, giving the time-changed process, say $\{S(t)\} \equiv \{X( N(t) ) \}$. 
While the base process considered in this paper is rather general, i.e., simple conservative stable processes, our time change is very specific: a unit-rate \kwdb{Poisson process}. 

The choice of \kwdb{Poisson} time change is \kwy{unusual in the literature} but key if we wish to avoid directly integrating out the random time from the \kwo{hierarchy} $X \big( N(t) \big)$ by following our alternative approach based on \kwb{trajectory composition} to prove \kwp{Theorem~\ref{thm:rates}} below. 
In the time randomization \kwy{literature}, L\'{e}vy processes \citep{sato1999} are routinely considered as time changes. 
One such process that has recurrently been chosen to obtain closed-form expressions is the gamma process. 
Examples include time-changed diffusions \citep{madan1998} or, in the context of counting processes, time-changed Poisson processes \citep{hougaard1997, kozubowski2009, lee1993} and time-changed pure-birth and pure-death processes \citep{breto2011}. 
In spite of processes other than gamma having been considered 
\citep{marion2000, varughese2008, 
kumar2011}, Poisson processes seem to have been dispensed with. 
\kwy{This literature} emphasizes the random-variable hierarchy perspective, which drags us to the problematic non-linear expected transition probabilities analogous to~\eqref{eqn:gamma-rate}. 
This \kwo{hierarchical perspective} gives the following analytical expression for the time-changed transition rates using~\eqref{eqn:transition-rates} directly 
\begin{eqnarray}
\label{eqn:analytic}
\lim\limits_{\dt \downarrow 0} \frac{E_{N (t+\dt)} \Bigg[ \; P \Big( \; B \big(N (t+\dt) \big) = s + k \; \; \big| \; B \big(N (t) \big) = s \; \Big) \; \Bigg]}{\dt}.
\end{eqnarray}
This expression is not in closed form. 
To obtain one, the limit could be handled using Taylor expansions but the expectation is likely to be non-linear (as argued in the introduction). 
Such non-linear expectations can be circumvented by composing the trajectories of the processes (more on this in \ref{app:proof}). 
\kwb{Trajectories are easier to compose}, as we argue in \ref{app:proof}, if the time change is 
a Poisson process like in \kwp{Theorem~\ref{thm:rates}}.

%
%
\begin{theorem}[\textbf{Transition rates of simple Markov counting processes time-changed by Poisson processes}]
\label{thm:rates}
Let $\{X(t)\}$ be a continuous-time Markov counting process that is time-homogeneous, simple, conservative, stable, and defined by rate function $\lambda_{X}(x) = \q[x,1][X][] < \infty$. 
Let $\{N(t)\}$ be a Poisson process defined by $\lambda_N(n)=1$.
Let also both processes be 
independent.
Then, the transition rates and rate function of the time-changed process $\{S(t)\} \equiv \Big\{ X\big(N(t)\big) \Big\}$ are
\bea
\label{eqn:time-changed-rates}
\q[s,k][S][] &=& P \Big (X(1)=s+k \; \big | \;  X(0) = s \Big )\\
\label{eqn:time-changed-rate}
\lambda_{S}(s) &=& 1-e^{-\lambda_X (s)}.
\eea
\end{theorem}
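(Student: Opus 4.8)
The plan is to sidestep the non-linear expectation in \eqref{eqn:analytic} and instead reason directly about the composed trajectory $S(t) = X(N(t))$ over a short physical interval $[t,t+\dt]$, conditioning on the current Poisson level $N(t)=n$ and base state $X(n)=s$. The engine of the argument is that a unit-rate Poisson process advances by $0$ over $[t,t+\dt]$ with probability $1-\dt+o(\dt)$, by exactly $1$ with probability $\dt + o(\dt)$, and by two or more with probability $o(\dt)$; so to leading order in $\dt$ the only way $S$ can move is through a single Poisson tick, which advances the base clock by exactly one unit of its own time.

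First I would fix $k\ge 1$ and write, using independence of the two processes together with the Markov property and time-homogeneity of $\{X(t)\}$,
\[
P\Big(S(t+\dt)=s+k \,\big|\, S(t)=s\Big) = P\Big(N(t+\dt)-N(t)=1\Big)\,P\Big(X(n+1)=s+k \,\big|\, X(n)=s\Big) + o(\dt),
\]
where the zero-increment event contributes only when $k=0$ and the increment-$\ge 2$ events are absorbed into $o(\dt)$. Time-homogeneity replaces $P(X(n+1)=s+k \mid X(n)=s)$ by $P(X(1)=s+k \mid X(0)=s)$, which is free of $n$, so the conditioning on $N(t)$ disappears. Dividing by $\dt$ and letting $\dt\downarrow 0$, with $P(N(t+\dt)-N(t)=1)=\dt+o(\dt)$, then delivers \eqref{eqn:time-changed-rates}.

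For the rate function I would sum the transition rates over all jump sizes,
\[
\lambda_S(s) = \sum_{k\ge 1}\q[s,k][S][] = \sum_{k\ge 1} P\Big(X(1)=s+k \,\big|\, X(0)=s\Big) = 1 - P\Big(X(1)=s \,\big|\, X(0)=s\Big),
\]
the last step using that a counting process is non-decreasing, so $X(1)\ge s$ given $X(0)=s$ and the only missing term is $k=0$. Since $X$ cannot return to $s$ once it leaves, staying at $s$ throughout $[0,1]$ is the same event as the holding time in $s$ exceeding $1$; with $X$ simple its sole exit from $s$ is the size-one transition at rate $\lambda_X(s)=\q[s,1][X][]$, and stability makes this holding time exponential, so $P(X(1)=s \mid X(0)=s)=e^{-\lambda_X(s)}$, yielding \eqref{eqn:time-changed-rate}.

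The hard part will be making the trajectory-composition step fully rigorous rather than heuristic. Two points deserve care: that inserting the exact one-step law $P(X(1)=\cdot \mid X(0)=s)$ genuinely accounts for every jump of $S$, including the compound jumps of size $k>1$ that arise because the base process may take several of its own unit steps within the single unit of base-time consumed by one Poisson tick---this is exactly where an over-dispersed $S$ emerges from a simple $X$; and that the $o(\dt)$ remainders are uniform enough to survive division by $\dt$, which is where conservativeness and the finiteness of $\lambda_X$ enter. These bookkeeping details are deferred to~\ref{app:proof}.
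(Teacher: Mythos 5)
Your proof is correct, but it takes a genuinely different route from the one in \ref{app:proof}. The paper never expands over a short physical interval $[t,t+\dt]$: it works globally with the composed trajectory and reconstructs the law of $\{S(t)\}$ as a jump process. There, the inter-event times of $\{S(t)\}$ are geometric sums of Poisson inter-event times (each Poisson tick produces an event of $\{S(t)\}$ with probability $\pi(s)=1-e^{-\lambda_X(s)}$, namely the probability that $X$ moves during the corresponding unit of internal time), hence exponential with parameter $\pi(s)$, which gives \eqref{eqn:time-changed-rate} first; conditionally on a tick producing an event, the jump size has law $P(X(1)=s+k\,|\,X(0)=s)/\pi(s)$, and multiplying by the event rate gives \eqref{eqn:time-changed-rates}. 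You argue locally and in the reverse order: conditioning on whether $N$ makes $0$, $1$, or at least $2$ ticks in $[t,t+\dt]$ yields \eqref{eqn:time-changed-rates} straight from the definition \eqref{eqn:transition-rates}, and \eqref{eqn:time-changed-rate} then follows by summing over $k$ together with $P(X(1)=s\,|\,X(0)=s)=e^{-\lambda_X(s)}$---the same exponential-holding-time fact the paper uses as the failure probability of its geometric variable. Your route is more elementary (its only distributional inputs are $P(N(\dt)=1)=\dt+o(\dt)$, $P(N(\dt)\geq 2)=o(\dt)$ and the one-step law of $X$), and it isolates exactly why the Poisson choice works: it collapses the intractable expectation \eqref{eqn:analytic} into a two-term expansion. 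Your deferred technical worries are settled easily, though the attribution is slightly off: the uniformity of the remainders comes from the Poisson tail bound $P(N(\dt)\geq 2)=o(\dt)$, which is uniform in $k$ and in the value of $N(t)$, not from conservativeness of $X$; that same bound is what justifies identifying $\lambda_S(s)$ with $\sum_{k\geq 1}\q[s,k][S][]$, i.e., interchanging the $\dt$-limit with the sum over $k$. Stability ($\lambda_X(s)<\infty$) is what makes the holding time of $X$ exponential, and your step ``the only missing term is $k=0$'' uses that $X(1)$ is almost surely finite, which the paper builds into its definition of a counting process. What the paper's global argument buys in exchange is a complete pathwise description of $\{S(t)\}$---exponential holding times plus an explicit jump-size distribution---which Section~\ref{sec:applications} exploits (e.g., treating $\{S(t)\}$ as compound Poisson in the moment computations) and which underlies Note~\ref{note:gamma}'s diagnosis of why the strategy fails for gamma time changes (the overshoot problem); your local expansion, while arguably cleaner for the theorem itself, does not deliver that description.
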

\begin{proof}
Theorem~\ref{thm:rates} is proved in \ref{app:proof} to preserve the flow of the paper.
\end{proof} 
%
%
¥
\section{Applications: making time change more appealing to model over-dispersed systems}
\label{sec:applications}
The closed-form expressions in \kwp{Theorem~\ref{thm:rates}} make time randomization more attractive to applied researchers by providing them with details about their univariate time-changed process $\{S(t)\}$ that refer to the \kwdb{overall process behavior} and to the \kwp{role of specific parameters}, as we illustrate below with \kwdb{different time changes} and \kwdb{base processes} (even when \kwp{the distribution of $X(1)$ in~\eqref{eqn:time-changed-rates} is difficult to obtain}). 
This \kwp{distribution} only needs to be considered for univariate processes, which can then be used as building blocks to construct infinitesimally over-dispersed \kwdb{multivariate systems of counts}. 

The \kwdb{overall behavior of a process} is a subjective concept, which could include features like whether or not it is \kwy{well-behaved}, \kwo{compound} or whether its \kwb{transition rates and rate function} have any distinctive characteristics, all of which can be complemented by \kwp{additional information on the role of specific parameters}. 
The Poisson time-changed process $\{S(t)\}$ of Theorem~\ref{thm:rates} is \kwy{well-behaved} in the sense that its transition rates $\q[s,k][S][] \in [0,1] < \infty$ and that it remains conservative and stable, since $\lambda_{S}(s) = 1 - P \big (X(1)=s \; | \;  X(0) = s \big ) = \sum_{k\ge 1}\q[s,k][S][]$. 
\kwy{This} assures modelers that their time-changed model is not pathological and that it fits in the framework of most existing Markov process theory. 
$\{S(t)\}$ is also \kwo{compound}, which implies it is infinitesimally over-dispersed. 
Such \kwo{over-dispersion} informs applied scientists of the additional variability demanded by the data, which may also be interpreted as the base model $\{X(t)\}$ being misspecified \citep{breto2009, breto2011}. 
\kwo{Compoundness} also has implications for describing and summarizing $\{S(t)\}$. 
\kwo{It} suggests complementing the information given by the more conventional infinitesimal mean and variance with that given by the rate function and the mean and variance of the transition rates (or jump sizes), which have the following distinctive features in the case of $\{S(t)\}$. 
One on hand, moments of the \kwb{transition rates} in~\eqref{eqn:time-changed-rates} are given directly by the distribution of random variable $X(1)$. 
On the other hand, the \kwb{rate function} in~\eqref{eqn:time-changed-rate} is necessarily restricted to the interval $[0,1]$. 
\kwb{These} two features clue in modelers that event size moments will inherit any constraint present in the distribution of $X(1)$ and that the event rate function of $\{S(t)\}$ is bounded from above. 
Moreover, the closed form for the \kwb{rate function} given in \eqref{eqn:time-changed-rate} permits quantitatively assessing the impact on $\lambda_S(s)$ of \kwp{parameters} appearing in the original $\lambda_X(x)$. 

The \kwp{role of specific parameters} can be analyzed in more detail by using additional information on the \kwy{distribution} of $X(1)$ given $X(0)$ to measure the \kwo{impact of a parameter on transition rates, on the rate function and on moments}, which in addition facilitates \kwb{connecting and comparing} the time-changed process to other processes, as we illustrate in \kwdb{several examples} below. 
The required \kwy{distribution} of $X(1)$ is a ubiquitous result for Poisson base processes $\{X(t)\}$ with rate $\alpha$ (independent of Poisson time change $\{N(t)\}$), for which $X(1) \sim \text{Poisson}(\alpha)$. 
This \kwy{distribution} is all that is needed to apply Theorem~\ref{thm:rates} (from which we borrow the notation) to prove the following corollary. 
\begin{corollary}[\textbf{Poisson Poisson process}]\label{cor:PP}
Consider Theorem~\ref{thm:rates} and let $ \lambda_X(x) = \alpha$, i.e., let $\{X(t)\}$ be a Poisson process with rate $\alpha \in \R_{>0}$. 
The transition rates of $\{S(t)\}$ are the probability mass function of a Poisson random variable parameterized by rate $\alpha$, i.e., $\q[s,k][S][] = \alpha^k e^{-\alpha} / k!$ and its rate function is $\lambda_{S}(s) = 1-e^{-\alpha}$.
\end{corollary}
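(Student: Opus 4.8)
The plan is to apply Theorem~\ref{thm:rates} directly, since a rate-$\alpha$ Poisson base process manifestly satisfies its hypotheses and the only additional ingredient needed is the one-step conditional law of $X(1)$ given $X(0)$. First I would confirm that $\{X(t)\}$ with $\lambda_X(x)=\alpha$ meets the theorem's requirements: a Poisson process is time-homogeneous; it is simple because its only nonzero transition rate is $\q[x,1][X][]=\alpha$, so no two events occur simultaneously; and it is conservative and stable because $\lambda_X(x)=\sum_{k\ge 1}\q[x,k][X][]=\alpha<\infty$ for every $x$. Once these are checked, both conclusions of Theorem~\ref{thm:rates} apply without modification.

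Next I would evaluate the conditional distribution appearing in~\eqref{eqn:time-changed-rates}. Because a Poisson process has stationary and independent increments, the unit-interval increment $X(1)-X(0)$ is Poisson distributed with mean $\alpha$ and is independent of the starting value $X(0)$, so that
\begin{eqnarray*}
P\Big(X(1)=s+k \;\big|\; X(0)=s\Big) &=& P\Big(X(1)-X(0)=k\Big) \;=\; \frac{\alpha^{k} e^{-\alpha}}{k!},
\end{eqnarray*}
with no dependence on $s$. Substituting this into~\eqref{eqn:time-changed-rates} yields $\q[s,k][S][]=\alpha^{k}e^{-\alpha}/k!$, the Poisson$(\alpha)$ probability mass function claimed. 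The rate function is then immediate: since $\lambda_X(s)=\alpha$ is constant in $s$, equation~\eqref{eqn:time-changed-rate} gives $\lambda_S(s)=1-e^{-\lambda_X(s)}=1-e^{-\alpha}$.

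There is essentially no obstacle to overcome here; this corollary is the simplest illustration of Theorem~\ref{thm:rates}, and all of the substantive work resides in the theorem itself (the trajectory-composition argument deferred to~\ref{app:proof}). The one point I would state carefully is that the quantity feeding~\eqref{eqn:time-changed-rates} is the law of the \emph{increment} $X(1)-X(0)$ rather than of $X(1)$ in isolation; these agree in distribution for a Poisson process by stationarity of increments, which is exactly why the resulting transition rates are the unconditional Poisson$(\alpha)$ masses and are independent of the current state $s$.
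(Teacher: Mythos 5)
Your proposal is correct and matches the paper's approach exactly: the paper likewise proves this corollary by noting that for a rate-$\alpha$ Poisson base process the law of $X(1)$ given $X(0)=s$ is the Poisson$(\alpha)$ mass shifted by $s$, and then substituting into~\eqref{eqn:time-changed-rates} and~\eqref{eqn:time-changed-rate} of Theorem~\ref{thm:rates}. Your explicit verification of the theorem's hypotheses and your care in distinguishing the increment $X(1)-X(0)$ from $X(1)$ itself are slightly more detailed than the paper's one-line justification, but the substance is identical.
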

\begin{note}[\textbf{On Poisson Poisson processes}]
The name ``Poisson Poisson process'' has been used in the literature to refer to a different process, i.e., to refer to compound Poisson processes with Poisson event size distribution \citep[see for example][]{hanson2007}. 
\end{note}
Corollary~\ref{cor:PP} gives a rate function and transition rates for $\{S(t)\}$ parameterized solely by $\alpha$. 
The \kwo{impact of $\alpha$ on transition rates, the rate function and moments} for $\{S(t)\}$ is different than for $\{X(t)\}$. 
Before the time change, an increase in $\alpha$ linearly increases the rate $\lambda_X$ at which events occur in $\{X(t)\}$ and events have invariably size one. 
After the time change, its \kwo{impact} on the event rate $\lambda_S$ becomes non-linear and, more importantly, it changes the mean and variance of event sizes of $\{S(t)\}$. 
These event size moments are simply those of $X(1)$, i.e., both $\alpha$ and, although knowing them is informative, they differ from the infinitesimal moments of $\{S(t)\}$. 
These \kwo{infinitesimal moments} and their ratio (the dispersion index $D_{dS}$) can be derived by noticing that $\{S(t)\}$ is a compound Poisson process, so that
\bea
\label{eqn:inf-mean}
\mu_{dS}(s) = \lim\limits_{\dt \downarrow 0} \frac{ E \big[ S(t + \dt) | S(t) = s\big] }{\dt} &=& \lim\limits_{\dt \downarrow 0} \frac{\alpha \big( 1 - e^{-\alpha} \big)\dt}{\dt} = \alpha \big( 1 - e^{-\alpha} \big)\\
\label{eqn:inf-var}
\sigma^2_{dS}(s) = \lim\limits_{\dt \downarrow 0} \frac{ V \big[ S(t + \dt) | S(t) = s\big] }{\dt} &=& \lim\limits_{\dt \downarrow 0} \frac{\alpha(1+ \alpha ) \big( 1 - e^{-\alpha} \big) \dt}{\dt} = \alpha(1+ \alpha ) \big( 1 - e^{-\alpha} \big)\\
\label{eqn:inf-disp}
D_{dS}(s) = \frac{\sigma^2_{dS}(s)}{\mu_{dS}(s)} &=&1 + \alpha
\eea
The \kwo{infinitesimal} quantities~\eqref{eqn:inf-mean}--\eqref{eqn:inf-disp} allow further analysis of the difference in the role of $\alpha$ in $\{S(t)\}$ and in $\{X(t)\}$. 
In addition, \kwo{they} are also important for applied modelers interested in bounding simulation computational load, which is a key element when applying the plug-and-play inference methods pointed to in the introduction. 
Computational load can be bound by approximating $\{S(t)\}$ at certain times using deterministic ordinary differential equations (ODEs), as explored for example in \citet{haseltine2002}. 
For example, the unit-rate Poisson time $\{N(t)\}$ of Corollary~\ref{cor:PP} could be approximated by function $n(t)=t$, giving the approximating ODE $dn(t)/dt = 1$. 
Other \kwb{approximating ODEs} regarding Corollary~\ref{cor:PP} could be $dx(t)/dt = \alpha$ and $ds(t)/dt = \alpha \big( 1 - e^{-\alpha} \big) \neq dx\big( n(t) \big) /dt = \alpha$. 
The \kwb{approximating ODE} $ds(t)/dt = \alpha$ is correct nevertheless if the time change used is instead the \kwp{gamma process} used to derive~\eqref{eqn:gamma-rate} \citep{breto2011}. 

The \kwp{gamma time change} used to derive~\eqref{eqn:gamma-rate} has \kwy{disadvantages} when compared to our \kwo{Poisson time change}, regardless of the \kwdb{base process}. 
A fundamental \kwy{disadvantage} is that our approach to finding the closed-form expressions in Theorem~\ref{thm:rates} does not work, basically because the gamma trajectories are more complex (more details in Note~\ref{note:gamma} in~\ref{app:proof}). 
A minor \kwy{disadvantage} is that it introduces an additional parameter, $\tau$. 
This additional parameter complicates the preceding non-trivial comparison of the role of $\alpha$ in $\{S(t)\}$ with its role in $\{X(t)\}$. 
Of course, $\tau$ makes $\{S(t)\}$ more flexible, e.g., $D_{dS}=1 + \alpha \tau$ \citep{breto2011}, and could always be fixed at some arbitrary value, say $\tau=1$, which would give the same $D_{dS}$ for both the Poisson and the gamma time change. 
In this sense, the \kwo{Poisson choice} makes $\{S(t)\}$ simpler and still infinitesimally over-dispersed. 
The \kwo{Poisson time change} could be replaced by a compound Poisson time change. 
This \kwo{extension} would allow for additional parameters that could play the role of $\tau$, providing additional flexibility. 
\kwo{It} would also still permit proving Theorem~\ref{thm:rates} with some minor modifications (which are beyond the scope of this paper). 
The above analysis comparing the role of parameters before and after changing time and with different time changes is also possible for \kwdb{base processes other than Poisson}. 

\kwdb{Base processes} for which the distribution of $X(1)$ is well-known also include \kwy{linear birth and linear death processes} \citep[see for example ][]{bharucha1960}, for which we next provide closed-form rates and \kwo{moments} before considering applying Theorem~\ref{thm:rates} to processes for which this distribution is \kwp{more difficult to obtain}. 
\begin{corollary}[\kwy{\textbf{Negative-binomial Poisson process}}]\label{cor:NBP}
Let $ \lambda_X(x) = \beta x \Ind{x > 0}$, i.e., let $\{X(t)\}$ be a linear birth process with individual birth rate $\beta \in \R_{>0}$. 
The transition rates of $\{S(t)\}$ are the probability mass function of a negative binomial random variable parameterized by success probability $1 - e^{-\beta}$ and number of failures until stopping $s$, i.e., $\q[s,k][S][] = {s + k - 1 \choose k} \left( e^{-\beta} \right)^s \left(1 - e^{-\beta} \right)^k$ for $s \in \N_{>0}$ and its rate function is $\lambda_{S}(s) = 1-e^{-\beta s \Ind{s > 0}}$.
\end{corollary}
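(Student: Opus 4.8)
The plan is to apply Theorem~\ref{thm:rates} with $\{X(t)\}$ taken to be the linear birth process, so that the whole corollary collapses to identifying the one-step-in-time distribution of $X(1)$. First I would check that the linear birth process satisfies the theorem's hypotheses. Births occur one individual at a time, so the only nonzero transition rate is $\q[x,1][X][] = \beta x$ and $\{X(t)\}$ is simple; since $\lambda_X(x) = \beta x \Ind{x>0} < \infty$ for every finite $x$, it is conservative and stable; and the rates carry no dependence on $t$, so it is time-homogeneous. With the hypotheses in place, equation~\eqref{eqn:time-changed-rate} delivers the rate function with no further work, namely $\lambda_S(s) = 1 - e^{-\lambda_X(s)} = 1 - e^{-\beta s \Ind{s>0}}$, and equation~\eqref{eqn:time-changed-rates} reduces the transition-rate claim to computing $P\big(X(1) = s+k \mid X(0) = s\big)$.

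The core of the argument is therefore to recover the transition law of the linear birth (Yule) process at $t=1$. I would exploit its branching structure: starting from $s$ individuals, each one founds a subpopulation that evolves independently as a linear birth process from a single ancestor, so $X(t)$ given $X(0)=s$ is the sum of $s$ independent copies of the single-ancestor process. A single-ancestor linear birth process has the geometric marginal $P\big(X(t)=n \mid X(0)=1\big) = e^{-\beta t}\big(1 - e^{-\beta t}\big)^{n-1}$ for $n \ge 1$, and a sum of $s$ independent such geometric variables is negative binomial. Evaluating at $t=1$ and writing $n = s+k$ gives $P\big(X(1)=s+k \mid X(0)=s\big) = \binom{s+k-1}{s-1}\big(e^{-\beta}\big)^s\big(1 - e^{-\beta}\big)^k$; since $\binom{s+k-1}{s-1} = \binom{s+k-1}{k}$, this is exactly the asserted $\q[s,k][S][]$ once substituted into equation~\eqref{eqn:time-changed-rates}. (For $s=0$ one has $\lambda_X(0)=0$, the process is absorbed, and both formulas degenerate consistently, which is why the transition-rate statement is restricted to $s \in \N_{>0}$.)

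The main obstacle is establishing the geometric marginal of the single-ancestor process, from which the negative-binomial form follows purely by independence of lineages. This is a classical fact, so I would either cite the standard Yule-process result or derive it directly by solving the forward Kolmogorov equations $\tfrac{d}{dt}P_n(t) = \beta(n-1)P_{n-1}(t) - \beta n P_n(t)$ for $\{X(t)\}$ started at $1$, e.g. via its probability generating function, and then pass to the $s$-ancestor case by the branching property. Everything else is bookkeeping: checking hypotheses, invoking Theorem~\ref{thm:rates}, and rewriting the binomial coefficient so that $k$ appears as the count of successes before the $s$-th failure with success probability $1 - e^{-\beta}$.
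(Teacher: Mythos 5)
Your proposal is correct and matches the paper's own route: the paper likewise proves this corollary simply by invoking Theorem~\ref{thm:rates} together with the well-known negative-binomial law of $X(1)$ for a linear birth process started at $s$ (cited to the classical literature rather than re-derived). Your additional sketch of that classical fact via the branching property and the geometric single-ancestor marginal is sound, just more detail than the paper supplies.
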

\begin{corollary}[\kwy{\textbf{Binomial Poisson process}}]\label{cor:BP}
Let $ \lambda_X(x) = \delta (d_0 - x) \Ind{x < d_0}$, i.e., let $\{X(t)\}$ be the counting process associated with a linear death process with individual death rate $\delta \in \R_{>0}$ and initial population size $d_0 \in \N_{>0}$. 
The transition rates of $\{S(t)\}$ are the probability mass function of a binomial random variable parameterized by success probability $1 - e^{-\delta}$ and number of trials $(d_0 - s)$, i.e., $\q[s,k][S][] = { d_0 - s \choose k} \left( 1 - e^{-\delta} \right)^k \left( e^{-\delta}\right)^{ \left( d_0 - s \right) - k}$ with $k = 1, \ldots, d_0 - s\;\,$for each $s = 0, \ldots, d_0-1$ and its rate function is $\lambda_{S}(s) = 1-e^{-\delta (d_0 - s) \Ind{s < d_0} }$.
\end{corollary}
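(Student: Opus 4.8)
The plan is to invoke Theorem~\ref{thm:rates} and then supply the one missing ingredient, namely the conditional law of $X(1)$ for the counting process of a linear death process. First I would check the hypotheses: this process is time-homogeneous, its only events are single deaths so it is simple, and its rate function $\lambda_X(x) = \delta(d_0 - x)\Ind{x < d_0}$ is finite for every $x$, so it is conservative and stable. Theorem~\ref{thm:rates} therefore applies, and the rate function follows at once by substituting $\lambda_X(s)$ into~\eqref{eqn:time-changed-rate}, giving $\lambda_{S}(s) = 1 - e^{-\delta(d_0 - s)\Ind{s < d_0}}$. It then remains only to evaluate $\q[s,k][S][] = P\big(X(1) = s + k \mid X(0) = s\big)$.

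For this I would use the individual-lifetime description of the death process. Conditioning on $X(0) = s$ means $s$ deaths have already occurred, leaving $d_0 - s$ individuals alive at time $0$; in a linear death process these individuals have independent $\mathrm{Exponential}(\delta)$ lifetimes. The number of further deaths in the unit interval, $X(1) - s$, counts those individuals whose residual lifetime is at most $1$, an event of probability $1 - e^{-\delta}$ independently across individuals. Hence $X(1) - s$ is $\mathrm{Binomial}\big(d_0 - s,\, 1 - e^{-\delta}\big)$, and reading off its mass function yields $\q[s,k][S][] = \binom{d_0 - s}{k}(1 - e^{-\delta})^k (e^{-\delta})^{(d_0 - s) - k}$ for $k = 1, \ldots, d_0 - s$ and each $s = 0, \ldots, d_0 - 1$.

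There is no genuine obstacle here, only boundary bookkeeping that I would handle with care: the admissible jump sizes $k$ must not exceed the number $d_0 - s$ of available individuals, and once $s = d_0$ the binomial degenerates to a point mass at zero in agreement with $\lambda_{S}(d_0) = 0$. As a final consistency check I would verify that the transition rates sum to the rate function, $\sum_{k \ge 1} \q[s,k][S][] = 1 - (e^{-\delta})^{d_0 - s} = \lambda_{S}(s)$, matching the expression derived independently in the first step.
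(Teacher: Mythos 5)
Your proposal is correct and follows essentially the same route as the paper: verify the hypotheses of Theorem~\ref{thm:rates}, substitute $\lambda_X$ into~\eqref{eqn:time-changed-rate} for the rate function, and identify the transition rates with the law of $X(1)$ given $X(0)=s$, which is binomial for a linear death process. The only difference is that you derive this binomial law from the memoryless exponential-lifetime description, whereas the paper simply treats it as a well-known classical fact (citing Bharucha-Reid); your consistency check that the rates sum to $\lambda_S(s)$ is a nice extra but not needed.
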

In both Corollary~\ref{cor:NBP} and~\ref{cor:BP}, the \kwo{event size moments} are the moments of the corresponding negative binomial and binomial distributions. 
\kwo{Infinitesimal moments} are provided below for the linear death process of Corollary~\ref{cor:BP} only. 
(Providing them for Corollary~\ref{cor:NBP} would require a justification to pass the $\dt$-limit inside the expectation, a technicality that at this point we prefer leaving for future research.) 
If $\{S(t)\}$ is the process from Corollary~\ref{cor:BP}, then 
\bea
\nonumber
\mu_{dS}(s) &=& \sum_{k=1}^{d_0 - s}{ k \; \lim_{\dt \downarrow 0} \dt^{-1} P \Big ( S(t+\dt) = s + k \; \big | \; S(t) = s \Big ) } = \sum_{k=1}^{d_0 - s}{ k \; P \Big ( X(1) = s + k \; \big | \; X(0) = s \Big ) } = (d_0 - s) \Big (1- e^{-\delta} \Big )\\
\nonumber 
\sigma^2_{dS}(s) &=& \sum_{k=1}^{d_0 - s}{ k^2 \; P \Big ( X(1) = s + k \; \big | \; X(0) = s \Big ) } + \lim_{\dt \downarrow 0} \frac{\mu_{dS}^2(s) \dt^2 + o(\dt)}{\dt}= \mu_{dS}(s) \; \Bigg[ 1 + (d_0 - s - 1) \Big( 1 - e^{-\delta} \Big ) \Bigg] \\
\nonumber 
D_{dS}(s) &=& 1 + (d_0 - s - 1) \Big( 1 - e^{-\delta} \Big )
\eea
These \kwo{infinitesimal moments} are again different from those of a binomial gamma process, but in both cases $\{S(t)\}$ is infinitesimally equi-dispersed for $s=d_0-1$ \citep{breto2011}. 
Regarding \kwb{approximating ODEs} corresponding to Corollary~\ref{cor:BP}, they are similar to those of Corollary~\ref{cor:PP}: $dx(t)/dt = \delta (d_0 - x) \Ind{x < d_0}$ and $ds(t)/dt =  (d_0 - s) \Big (1- e^{-\delta}\Big )\Ind{s < d_0}$. 
All of the preceding analysis of the role of parameters has been greatly facilitated by \kwp{nice and tractable distributions of $X(1)$}. 

When the \kwp{distribution of $X(1)$ is difficult to obtain}, the closed-form results provided by Theorem~\ref{thm:rates} inherit this \kwy{lack of tractability}, which, in the most extreme case, can force the use of \kwo{approximations} in both univariate and \kwdb{multivariate settings}. 
An example of a \kwy{distribution that is difficult to handle} is that of non-linear death processes with $\lambda_X(x) = x (d_0 - x)\Ind{x < d_0}$, which is of current interest in the context of bacterial disinfection \citep{chou2005} and for which Theorem~3 in \citet{billard1979} gives intricate closed-form expressions for $P \big( X(1) = s + k \; | \; X(0) = s \big)$. 
In this case, results can be obtained without the need of any approximation. 
\kwo{Approximation} of the distribution of $X(1)$ is an option if this distribution is unavailable in closed form. 
Such \kwo{approximation} can be done numerically but numerical approximations can be avoided at least for death processes with general death rate, including the following non-linear death processes: logistic and exponential death processes \citep[which are of interest to model invasion of larvae,][]{faddy1999}; Bass death processes \citep[to model innovation diffusion in social groups, e.g.,][]{karmeshu2007}; and power death process \citep[to model chemical formation of activated carbons,][]{fan2011}. 
For such processes, it is possible to truncate the infinite sum for $P \big( X(1) = s + k \; | \; X(0) = s \big)$ provided in Theorem~2 in \citet{billard1980}. 
These numerical or truncation \kwo{approximations} should in any case be expected to give more reliable results than empirically attempting to approximate the limit and expectation in~\eqref{eqn:analytic} to obtain approximate time-changed transition rates. 
The \kwo{transition rates} considered so far refer to univariate counting processes only but can also be used to construct \kwdb{multivariate Markov counting systems}. 

\kwdb{Over-dispersed Markov counting systems} have been constructed combining the transition rates of Poisson gamma and binomial gamma processes as \kwy{building blocks} of a multivariate counting system in the context of epidemiological susceptible-infectious-recovered (SIR) \kwo{compartmental models} \citep{breto2011, breto2012}. 
These \kwy{blocks} based on gamma time changes can be replaced by the Poisson-based blocks presented in this paper, resulting in alternative SIR compartmental models. 
Such alternative SIR \kwo{models} can then be considered as a more parsimonious alternative to the gamma-based models to test for over-dispersion (i.e., for model misspecification) and to improve the fit of the base model to data. 
%
%
¥
\section*{Acknowledgements}
This work was supported by Spanish Government Project ECO2012-32401 and Spanish Program \emph{Juan de la Cierva} (JCI-2010-06898). 
%
\bibliographystyle{elsarticle-harv}
%
%
\bibliography{references}
%
%
%
%
%
\appendix
\renewcommand{\thesection}{Appendix \Alph{section}}
\renewcommand{\theequation}{\Alph{section}.\arabic{equation}}
\setcounter{equation}{0}
\setcounter{section}{0}
%
%
¥
\section{}
\label{app:proof}
Before proving Theorem~\ref{thm:rates}, we give a formal definition of time change. 
Time change can be approached rigorously by defining an underlying probability space and by paying attention to the measure theory involved \citep[as in for example][]{sato1999, barndorff2010}. 
Such rigor is not needed for our proof but beginning with a probability space $(\Omega, \mathcal{F}, P)$ clarifies it. 
On this \kwy{probability space}, the collection (indexed by $t \in \R_{\geq 0}$) of random variables $X_t(\omega): \Omega \rightarrow \N_{\geq 0}$ defines the \kwo{base counting process} $\{X(t)\}$ of Theorem~\ref{thm:rates}. 
If instead of indexing by $t$, we index by $\omega$ (which we stress by switching to lowercase), $x_{\omega}(t)$ is an $\N_{\geq 0}$-valued deterministic function called the process trajectory or sample path. 
Composing trajectory $x_{\omega}(t)$ with that of Poisson process $\{N(t)\}$ is equivalent to defining the time-changed process $\{S(t)\}$ as the collection of random variables such that, for all $t$,
\begin{eqnarray}
\label{eqn:subordination}
S_t(\omega) = X_{N_t (\omega)} (\omega)
\end{eqnarray}
(as in \citealp{sato1999}) with trajectories $s_{\omega}(t) = x_{\omega} \big( n_{\omega}(t) \big)$. 
%
%
%
%
\begin{proof}[ \bf{Proof of Theorem~\ref{thm:rates}} ]
The result is proved by inferring, from the \kwdb{properties of the composed trajectory}, the \kwp{conditional distribution of inter-event times} in $\{S(t)\}$, which determines the rate function in~\eqref{eqn:time-changed-rate}, and \kwdb{the conditional distribution of event sizes} in $\{S(t)\}$, which determines the transition rates in~\eqref{eqn:time-changed-rates}.

A key \kwdb{property of the trajectories} of the processes involved is that they are flat with step or jump \kwy{discontinuities}, which carry information about the random \kwp{inter-event times}. 
Let these \kwy{discontinuity points} of trajectories $x_{\omega}(t)$, $n_{\omega}(t)$ and $s_{\omega}(t)$ be denoted by $\{\delta_1^x, \delta_2^x, \ldots \}$, $\{\delta_1^n, \delta_2^n, \ldots \}$ and $\{\delta_1^s, \delta_2^s, \ldots \}$ respectively.
These three \kwy{collections} are related, as can be seen from trajectory $s_{\omega}(t)$ implied in~\eqref{eqn:subord-traj} by definition~\eqref{eqn:subordination}: 
\begin{eqnarray}
\label{eqn:subord-traj}
s_{\omega}(t) \equiv x_{\omega}(n_{\omega}(t)) = \left\{ 
  \begin{array}{l l}
    x_{\omega}(0) & \quad \text{for $0 \leq t < \delta_1^n$}\\
    x_{\omega}(1) & \quad \text{for $\delta_1^n \leq t < \delta_2^n$}\\ 
    x_{\omega}(2) & \quad \text{for $\delta_2^n \leq t < \delta_3^n$}\\ 
    \;\;\;\;\vdots 		  & \quad \quad \quad \quad  \vdots
  \end{array} \right.
\end{eqnarray}

From~\eqref{eqn:subord-traj}, \kwy{$\big \{\delta_1^s, \delta_2^s, \ldots \big \}$} will be the subsequence $\big\{\delta_{g_1}^n, \delta_{g_2}^n, \ldots \big\}$ of $\big\{\delta_1^n, \delta_2^n, \ldots \big\}$ that skips $\delta_i^n$ if and only if $x_{\omega}(i) = x_{\omega}(i-1)$, e.g., $g_1=1$ (or equivalently $\delta_1^s = \delta_1^n$) if and only if $x_{\omega}(1) \neq x_{\omega}(0)$, otherwise $\delta_1^n$ will be skipped and $\delta_1^s$ will be some later discontinuity point of $n_{\omega}(t)$.
\kwy{Points \kwy{$\big \{\delta_1^s, \delta_2^s, \ldots \big \}$}} are in fact a set of realizations of the random event times in $\{S(t)\}$, providing a means to characterizing the \kwp{distribution of the $i^{th}$ inter-event time} of $\{S(t)\}$.
This random inter-event time is denoted by $T_{i}^S$ and its realizations by $t_{i}^S$. 
(In addition, we artificially set the first event time to $0$, so that $T_1^S$ coincides with the first non-zero event time.) 

The conditional \kwp{distribution of inter-event times} $T_{i}^S$ is easiest described via an example of trajectories, which later facilitates describing \kwdb{the conditional distribution of event sizes}. 
Consider for example the trajectories in Figure~\ref{fig:trajectories} below: the realized first inter-event time $t_1^S = t_1^N + t_2^N + t_3^N$. 
In general, $T_1^S = T_1^N + \ldots + T_{G_1}^N$, where $G_1$ follows a geometric distribution with succes probability $\pi ( X(0) ) = 1 - e^{\lambda_X( X(0) )}$. 
Since $X(0) = S(0)$ by definition of $N(0)$, conditionally on $S(0)=s$, $T_1^S \sim \text{exponential}(\pi (s) )$. 
Returning to Figure~\ref{fig:trajectories}, $t_2^S = t_4^N + t_5^N$. 
In general, $T_2^S = T_{G_1}^N + \ldots + T_{G_1 + G_2}^N$, where $G_2$ follows again a geometric distribution now with success probability $\pi ( X(G_1) )$. 
Since $X(G_1) = S(T_1^S)$, conditionally on $S(T_{1}^S) = s$, $T_2^S \sim \text{exponential}(\pi (s) )$. 
It follows that in general, conditionally on $S \big( \sum_{i=1}^{i-1}T_{i}^S \big) = s$, $T_i^S \sim \text{exponential}(\pi (s) )$, which shows~\eqref{eqn:time-changed-rate}. 

Regarding jump or \kwdb{event sizes}, define the first jump size $J_1 \equiv S(T_1^S) - S(0)$. 
In Figure~\ref{fig:trajectories}, $J_1 = X(3) - X(0) = X(3) - X(2)$. 
In general, $J_1 = X(G_1) - X(G_1 - 1)$. 
Since $X(G_1-1) = X(0) = S(0)$, conditionally on $S(0)=s$, the probability of a first jump of size $k>0$ is 
\bea
\nonumber
P \Big( J_1 = k \; \big | \; S(0) = s \Big) &=& \frac{P \Big( X(G_1) = s + k \; \big | \; X(G_1 -1 ) = s \Big)}{\pi(s)}\\ 
\nonumber
&=& \frac{P \Big( X(1) = s + k \; \big| \; X(0) = s \Big)}{\pi(s)}
\eea 
where the last equality follows by time homogeneity of $\{X(t)\}$. 
This result generalizes to the $i^{th}$ jump $J_i$ by an argument analogous to the preceding one (used to generalize the result for $T_1^S$ to $T_i^S$), which shows~\eqref{eqn:time-changed-rates}.
\end{proof}
\begin{note}[\textbf{On extending the proof to other time changes}]
\label{note:gamma}
The distributions of $T_i^S$ and $J_i^S$ are not as straightforward to derive using the strategy used in this proof if $\{N(t)\}$ is not a Poisson process. 
For example, if it is a gamma process, $T_i^S$ is still exponential but the distribution of $J_i^S$ depends on that of the over-shoot of the hitting time of $\{N(t)\}$ to the corresponding $\delta^X_i$. 
However, this over-shoot distribution seems to have remained elusive and we are only aware of approximations to its moments \citep[see for example][]{nicolai2009}. 
If instead of a gamma process, $\{N(t)\}$ is some counting process (of finite activity) other than Poisson, the strategy followed in this proof remains effective, although the proof becomes more involved. 
\end{note}
\begin{figure}[H]
\centering
	\includegraphics[scale=0.35]{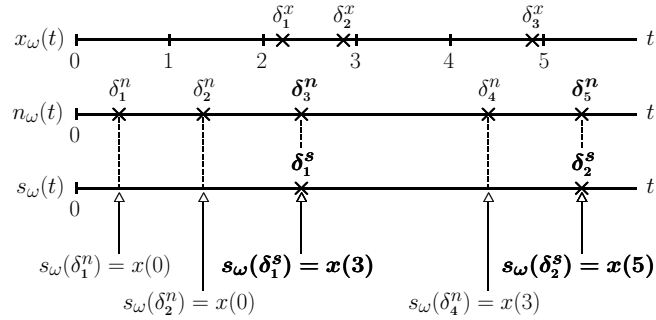}
\caption{Example of composition of trajectories. 
The three horizontal lines represent time and show (marked with $\bm{\times}$) realizations of several event times (i.e., trajectory discontinuities) for each process. 
Since the first event in $\{X(t)\}$ does not occur until time interval $(2,3]$, event time realizations $\delta_1^n$ and $\delta_2^n$ have no effect on $\{S(t)\}$ and $\delta_3^n$ triggers the first event time realization of $\{S(t)\}$ (i.e., $\delta_1^s = \delta_3^n$), turning the two events of size one in $\{X(t)\}$ occurred at $\delta_1^x, \delta_2^x \in (2,3]$ into a simultaneous event of size two in $\{S(t)\}$. 
After $\delta_1^S$, there is $\delta_2^S = \delta_5^N$, at which the single event (again of size one) occurred at $\delta_3^X \in (4,5]$ is shifted to $\delta_2^S$ and remains of size one. 
\label{fig:trajectories}}
\end{figure}
\end{document}